\numberwithin{equation}{section}
\let\oldr@@t\r@@t
\def\r@@t#1#2{%
\setbox0=\hbox{$\oldr@@t#1{#2\,}$}\dimen0=\ht0
\advance\dimen0-0.2\ht0
\setbox2=\hbox{\vrule height\ht0 depth -\dimen0}%
{\box0\lower0.4pt\box2}}
\LetLtxMacro{\oldsqrt}{\sqrt}
\renewcommand*{\sqrt}[2][\ ]{\oldsqrt[#1]{#2} }
\newcommand{\tmop}[1]{\ensuremath{\operatorname{#1}}}
\DeclareMathOperator{\Sym}{Sym}
\newcounter{Mycounter}[section]
\newcounter{lemma}[section]
\newcounter{claim}[section]
\newcounter{sublemma}[section]
\newcounter{corollary}[section]
\newcounter{theorem}[section]
\newcounter{conjecture}[section]
\newcounter{proposition}[section]
\newcounter{definition}[section]
\newenvironment{proof}{
{\bf Proof:}}{$\blacksquare$}
\begin{document}
\begin{center}
{\LARGE\bf
Boundness of $b_2$ for hyperk\"ahler manifolds with
  vanishing odd-Betti numbers\\[3mm]
}

Nikon Kurnosov\footnote{The article was prepared within the framework of a subsidy granted to the HSE by the Government of the Russian Federation for the implementation of the Global Competitiveness Program. Nikon Kurnosov is partially supported by grant MK-1297.2014.1., and  RScF grant, project 14-21-00053 dated 11.08.14.}

\end{center}

{\small \hspace{0.05\linewidth}

ABSTRACT. We prove that $b_2$ is bounded for hyperk\"ahler manifolds with vanishing odd-Betti numbers. The explicit upper boundary is conjectured. Following the method described by Sawon we prove that $b_2$ is bounded in dimension eight and ten in the case of vanishing odd-Betti numbers by 24 and 25 respectively.

}




\section{Introduction}

A Riemannian manifold $(M, g)$ is called hyperk\"ahler if it admits a
triple of a complex structures $I, J, K$ satisfying quaternionic relations and
K\"ahler with a respect to $g$. A hyperk\"ahler
manifold is always holomorphically symplectic. By the Yau Theorem \cite{Y}, a
hyperk\"ahler structure exists on a compact complex manifold
if and only if it is K\"ahler and holomorphically symplectic.

\hfill

\begin{definition}
  A compact hyperk\"ahler manifold $M$ is called \textit{simple} if $\pi_1(M)= 0$, $H^{2,0} (M)
  =\mathbb{C}$.
\end{definition}

\hfill

By Bogomolov decomposition theorem \cite{B} any hyperk\"ahler
manifold admits a finite covering which is a product of a torus and several simple hyperk\"ahler manifolds. Huybrechts \cite{H}
has proved finiteness of the number of deformation classes of holomorphic
symplectic structures on each smooth manifold. In most dimensions we only know two examples of simple hyperk\"ahler manifolds due to Beauville \cite{Bea}. This two infinite series are the
Hilbert schemes of $K3$ and the generalized Kummer varieties \cite{Bea}. Except them
two sporadic examples of O'Grady in dimension six and ten are known \cite{O1, O2}. It is known \cite{H} that there are only finitely many families with a given second cohomology lattice, hence the bounds on second Betti number provide restrictions on the number of deformation classes of hyperk\"ahler manifolds. 
There is Beauville Conjecture \cite{Bea1}

\hfill

\begin{conjecture}
The number of
deformation types of compact irreducible hyperk\"ahler manifolds is finite in
any dimension (at least for given $b_2$).
\end{conjecture}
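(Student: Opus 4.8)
{\bf A strategy toward the conjecture.} By the finiteness theorem of Huybrechts quoted above, for each fixed second cohomology lattice there are only finitely many deformation types, so a bound on $b_2$ would settle the conjecture --- in fact in its stronger, unrestricted form. Moreover, by Verbitsky's injectivity $\mathrm{Sym}^k H^2(M)\hookrightarrow H^{2k}(M)$ for $k\le n$ (with $\dim_{\mathbb{C}}M=2n$) one has $b_{2n}(M)\ge\binom{b_2+n-1}{n}$, so once the odd Betti numbers vanish, $\chi(M)=\sum_i b_{2i}(M)\ge b_{2n}(M)$ grows with $b_2$, and it suffices to bound the single Chern number $\int_M c_{2n}(M)=\chi(M)$. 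The plan is to carry this out by the method of Sawon, via Rozansky--Witten invariants and Hirzebruch--Riemann--Roch.

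Concretely, for a line bundle $L$ the Euler characteristic $\chi(M,L)=\int_M e^{c_1(L)}\,\mathrm{td}(M)$ is, by the Fujiki relations, a polynomial of degree $n$ in the Beauville--Bogomolov square $q(L)$ whose coefficients are universal rational multiples of the ``reduced'' characteristic numbers extracted from $\int_M\mathrm{td}_{2n-2k}(M)\cdot\alpha^{2k}$ after dividing by $q(\alpha)^k$. The normalization $\chi(\mathcal{O}_M)=n+1$, the same analysis applied to the Hirzebruch $\chi_y$-genus (with the symmetry $\chi_p=\chi_{2n-p}$ forced by $\Omega^1_M\cong T_M$), and the Hitchin--Sawon formula expressing $\int_M\sqrt{\widehat{A}(M)}$ through the volume and the $L^2$-norm of the curvature --- which in particular makes this number strictly positive --- together reduce the number of free characteristic numbers and impose explicit positivity constraints on them, exactly as in Guan's treatment of the case $\dim_{\mathbb{C}}M=4$. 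On the topological side, Salamon's linear relation among the Betti numbers, reinforced by the collapse of the Hodge diamond under the hypothesis of vanishing odd Betti numbers --- so that each $B_p:=\sum_q h^{p,q}(M)$ enters $\chi_y$ with a definite sign and each $b_{2i}$ is expressed through a shrinking list of independent parameters --- should then supply enough relations that the remaining characteristic numbers, and hence $\chi(M)$, are confined to a finite set of rationals; running this out explicitly in complex dimensions $8$ and $10$ should yield the bounds $24$ and $25$.

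The step I expect to be the main obstacle is the bookkeeping of constraints: a hyperk\"ahler $2n$-fold has $p(n)$ a priori independent Chern numbers (monomials in the even Chern classes $c_2,c_4,\dots$), and one must verify that after imposing the Riemann--Roch relations, the Rozansky--Witten relations, and the Hodge-diamond collapse, the still-free parameters are few enough to be pinned down --- or at least bounded --- by the available positivity ($\int_M\sqrt{\widehat{A}(M)}>0$, $\int_M c_2(M)\wedge\omega^{2n-2}>0$, $\chi(M)>0$) and integrality statements. Whether this balance survives for all $n$ with only the currently known inequalities is unclear, which is why one expects to prove boundedness of $b_2$ in every dimension under the odd-Betti hypothesis but to determine the sharp value only in low dimensions, the general bound remaining conjectural.
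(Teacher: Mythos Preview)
The statement you were asked to prove is the Beauville Conjecture, which the paper explicitly records as a \emph{conjecture} and does not claim to prove; there is therefore no proof in the paper to compare your attempt against. The paper's actual contribution is the weaker Theorem~3.1 (boundedness of $b_2$ under the additional hypothesis that all odd Betti numbers vanish) together with the explicit bounds $b_2\le 24$ and $b_2\le 25$ in complex dimensions $8$ and $10$. Your proposal is, as you label it, only a strategy, and you yourself concede in the last paragraph that it is not known to close for general $n$.

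Two further remarks are in order. First, already in your second sentence you tacitly impose the odd-vanishing hypothesis (``once the odd Betti numbers vanish, $\chi(M)=\sum_i b_{2i}(M)\ge b_{2n}(M)$''); without it the Euler characteristic need not dominate $b_{2n}$, so your outline does not address the full conjecture but only the same restricted class the paper treats. Second, even within that class your proposed route --- bounding the top Chern number via Riemann--Roch, Rozansky--Witten identities, and the Hitchin--Sawon positivity --- is Guan's dimension-four method and is \emph{not} what the paper does. The paper instead uses the Looijenga--Lunts--Verbitsky $\mathfrak{so}(b_2+2,\mathbb{C})$-action to express each $b_{2k}$ as the degree-$k$ polynomial $\dim\mathrm{Sym}^k H^2$ in $b_2$ plus nonnegative lower-degree contributions indexed by primitive Hodge pieces, and substitutes these into Salamon's linear relation among the $b_i$; a sign comparison of leading terms then bounds $b_2$ directly, with no Chern-number bookkeeping at all. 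This sidesteps precisely the obstruction you flag, namely that the number $p(n)$ of independent Chern monomials outgrows the available relations, and is why the paper obtains boundedness in every dimension (under the odd-vanishing hypothesis) while your approach, as you acknowledge, does not.
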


\hfill

In the complex dimension four, Guan \cite{G} proved that the second Betti number of a simple compact hyperk\"ahler manifold of dimension four
is bounded above by 23. Guan also have proved boundary conditions for $b_3$
using Rozansky-Witten invariants \cite{HS}. In dimension six inequality from Rozansky-Witten invariants could also be obtained in dimension six \cite{K}.

In dimension six the boundary conditions for $b_2$ has been obtained by Sawon
\cite{S}. In this paper we generalize Sawon's result on Hodge diamond structure for
dimensions eight and ten.

\hfill

{\bf Acknowledgments.} Author is grateful to his supervisor Misha Verbitsky
for discussions, also author wishes to thank Justin Sawon who kindly describe his method, and Fedor Bogomolov for some ideas.  Author would like
thank Evgeny Shinder and Tom Bridgeland for the possibility to give a talk on
this subject at Sheffield University.

\section{Preliminaries}

In this Section we recall main properties of cohomology groups of hyperk\"ahler manifolds.








There is the following theorem of Verbitsky \cite{V1}

\hfill

\begin{theorem}\label{verb-cohomol}
  Let $M$ be an irreducible hyperk\"ahler manifold of complex dimension a
  $2n$ and let $SH^2$($M$,$\mathbb{C}$)$\subset H^{\ast}(M,\mathbb{C})$ be the
  subalgebra generated by $H^2(M,\mathbb{C})$. Then $SH^2(M,\mathbb{C})$
  = $S^{\ast}H^2(M,\mathbb{C})/\left\langle \alpha^{n + 1} \right| q
  \left( \alpha \right) = 0 \left\rangle \right.$, where $q$ is a Beaville-Bogomolov-Fujiki--form.
\end{theorem}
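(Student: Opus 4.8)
The plan is to show that the tautological surjective homomorphism of graded algebras $\phi\colon S^{\ast}V\twoheadrightarrow SH^2(M,\mathbb{C})$, where $V:=H^2(M,\mathbb{C})$ and $\phi$ sends $S^dV$ onto the degree-$2d$ part of $SH^2(M,\mathbb{C})$, has kernel exactly the ideal $I:=\langle\,\alpha^{n+1}:q(\alpha)=0\,\rangle$; this is precisely the content of the theorem. Throughout I will use the Fujiki relation $\int_M\alpha^{2n}=c_M\,q(\alpha)^n$, $c_M>0$, together with its polarisation
\[
\int_M\alpha_1\cup\cdots\cup\alpha_{2n}\;=\;c_M\!\!\sum_{\text{pairings }\pi}\ \prod_{\{i,j\}\in\pi}q(\alpha_i,\alpha_j),\qquad\alpha_1,\dots,\alpha_{2n}\in V.
\]
The proof consists of the two inclusions $I\subseteq\ker\phi$ and $\ker\phi\subseteq I$.

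\emph{The inclusion $I\subseteq\ker\phi$.} It is enough to prove that $\alpha^{n+1}=0$ in $H^{2n+2}(M,\mathbb{C})$ for every isotropic $\alpha$. This is immediate when $\alpha=[\sigma_J]$ is the class of a holomorphic symplectic form for a complex structure $J$ in the deformation class of $M$: then $\sigma_J^{n+1}$ is a holomorphic $(2n+2)$-form on the $2n$-dimensional manifold $(M,J)$, hence vanishes, and $q([\sigma_J])=0$ by the standard properties of the BBF form. By the local Torelli theorem for hyperk\"ahler manifolds (Bogomolov, Beauville, Huybrechts), the periods $[\sigma_J]$ of complex structures deformation-equivalent to $M$ fill a Euclidean-open, hence Zariski-dense, subset of the smooth quadric $Q=\{q=0\}\subset\mathbb{P}(V)$. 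As the locus $\{[\alpha]\in Q:\alpha^{n+1}=0\ \text{in}\ H^{2n+2}(M,\mathbb{C})\}$ is Zariski-closed and contains this dense set, it equals $Q$; thus $\alpha^{n+1}=0$ for all isotropic $\alpha$, and since $\ker\phi$ is an ideal, $I\subseteq\ker\phi$.

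\emph{The inclusion $\ker\phi\subseteq I$.} Let $B$ be the symmetric bilinear ``Fujiki pairing'' on $S^{\ast}V$, given on $S^dV\times S^{2n-d}V$ by the right-hand side of the polarised relation above (and by $0$ in total degrees $\neq 2n$), so that $\int_M\phi(P)\cup\phi(R)=c_M\,B(P,R)$. If $P\in\ker\phi\cap S^dV$, then $\int_M\phi(P)\cup\gamma=0$ for every $\gamma\in H^{4n-2d}(M,\mathbb{C})$ by Poincar\'e duality on $M$; specialising $\gamma$ to $\phi(S^{2n-d}V)$ gives $B(P,R)=0$ for all $R\in S^{2n-d}V$, i.e.\ $P\in\operatorname{rad}(B)$. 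Hence $\ker\phi\subseteq\operatorname{rad}(B)$, and it remains to prove $\operatorname{rad}(B)=I$. This is a statement about the symmetric algebra of the quadratic space $(V,q)$ alone: decomposing each $S^dV=\bigoplus_{k\ge 0}q^{k}\mathcal{H}_{d-2k}$ into $\mathrm{O}(V)$-harmonics, $B$ pairs the summand $q^{k}\mathcal{H}_{j}\subset S^dV$ nontrivially with, and only with, the summand $q^{k'}\mathcal{H}_{j}\subset S^{2n-d}V$, and the latter exists exactly when $j\le 2n-d$; therefore $\operatorname{rad}(B)$ is the span of the harmonic summands with $j>2n-d$, which (transporting through the isomorphism $V\cong V^{\ast}$ defined by $q$) is the apolar ideal $\operatorname{Ann}(q^n)$. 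Finally $\operatorname{Ann}(q^n)$ is generated in degree $n+1$ by the space $\mathcal{H}_{n+1}$ of degree-$(n+1)$ harmonics, and $\mathcal{H}_{n+1}$ is the linear span of $\{\alpha^{n+1}:q(\alpha)=0\}$ — each such $\alpha^{n+1}$ is harmonic, and these vectors span the irreducible $\mathrm{O}(V)$-module $\mathcal{H}_{n+1}$. So $\operatorname{rad}(B)=I$ and $\ker\phi\subseteq I$. Combining the two inclusions, $\ker\phi=I$, and $\phi$ descends to the asserted isomorphism $SH^2(M,\mathbb{C})\cong S^{\ast}H^2(M,\mathbb{C})/\langle\,\alpha^{n+1}:q(\alpha)=0\,\rangle$.

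\emph{Main obstacle.} The formal steps — Poincar\'e duality, the elementary ``pigeonhole'' behaviour of the pairings, Zariski-closedness of vanishing loci — are routine. The two real inputs are: (i) the classical invariant-theory identification $\operatorname{rad}(B)=I$, i.e.\ the description of the apolar ideal of a power of a nondegenerate quadratic form; verifying that the ideal generated by $\mathcal{H}_{n+1}$ exhausts $\operatorname{rad}(B)$ in every degree is the main technical obstacle, and rests on the precise identities for applying harmonic differential operators to $q^n$ (equivalently, on a Pieri-type analysis in $S^{\ast}V$). And (ii) the Torelli-type density of the classes $[\sigma_J]$ in $Q$, which is where the deformation theory of hyperk\"ahler manifolds enters in an essential way. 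One can trade (ii) for the Verbitsky--Looijenga--Lunts $\mathfrak{so}(b_2+2)$-action on $H^{\ast}(M,\mathbb{C})$ — realising $SH^2(M,\mathbb{C})$ as the irreducible submodule generated by $1\in H^0(M,\mathbb{C})$ and reading off $\alpha^{n+1}\!\cdot 1=0$ from the Jordan type of $L_\alpha$ for isotropic $\alpha$ — but this uses the structure theorem for that Lie algebra and is no less deep.
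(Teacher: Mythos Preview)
The paper does not contain a proof of this statement: it is quoted in Section~2 as a known theorem of Verbitsky, with a citation to \cite{V1} (and the related \cite{V2}), and is used only as a black box to obtain the injection $S^nH^2(M)\hookrightarrow H^{2n}(M)$. So there is no ``paper's own proof'' to compare against.

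That said, your argument is essentially the standard one in the literature (Bogomolov, Verbitsky). The inclusion $I\subseteq\ker\phi$ via local Torelli density of period classes in the quadric is exactly how Verbitsky proceeds; the reverse inclusion via the polarised Fujiki relation, identifying $\ker\phi$ with the radical of the induced pairing on $S^{\ast}V$ and then computing that radical as the apolar ideal of $q^n$, is also the right route. The one place to be a bit more careful is the claim that $B$ restricts to a \emph{nondegenerate} pairing between $q^{k}\mathcal{H}_j\subset S^dV$ and $q^{k'}\mathcal{H}_j\subset S^{2n-d}V$ whenever both summands are present: this is true (it is equivalent to the Lefschetz-type statement that multiplication by $q^{n-j}$ is an isomorphism $\mathcal{H}_j\to q^{n-j}\mathcal{H}_j$ together with an explicit nonvanishing constant), but it is not automatic from $O(V)$-equivariance alone, since an invariant pairing between two copies of the same irreducible could in principle be zero. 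Once that nondegeneracy is checked, your identification $\operatorname{rad}(B)=\langle\mathcal{H}_{n+1}\rangle=I$ goes through.
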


\hfill

The inclusion $S^n H^2 \left( M \right) \hookrightarrow H^{2 n}$ follows from this Theorem.

\hfill

It is known that in a hyperk\"ahler case there is an action of $\mathfrak{so}(5)$-algebra \cite{V1},
and moreover an action of $\mathfrak{so} ( b_2 + 2,\mathbb{C})$ described by
Looijenga and Lunts \cite{LL}, and Verbitsky \cite{V2}. By this results one can decompose
H$^{even}$(M,$\mathbb{C}$) into irreducible representations for this
$\mathfrak{so}(b_2$ +2,$\mathbb{C}$)-action. The Hodge diamond is the projection onto a
plane of the (higher-dimensional) weight lattice of $\mathfrak{so}(b_2$ +2,$\mathbb{C}$)
so we can figure out the highest weights which lie in octant of Hodge diamond.

Now irreducible representations are isomorphic to $\Lambda^{i}\mathbb{C}^{b_2+2}$. Thus, their dimensions are polynomial in terms of $b_2$. Contribution of $\Lambda^{i}\mathbb{C}^{b_2+2}$ to each even cohomology groups also has dimension polynomial in terms of $b_2$.

\hfill

In early 90-s Salamon \cite{Sa} has used hyperk\"ahler symmetries of Hodge numbers and expression of Euler
characteristic to prove the following equation with Betti
numbers:
\[ n b_{2 n} = 2 \sum_{i = 1}^{2 n} \left( - 1 \right)^i \left( 3 i^2
   - n \right) b_{2 n - i} . \]

\section{Boundary conditions for $b_2$}

In this Section we prove the following

\hfill

\begin{theorem}\label{boundness}
  Let $M$ be a hyperk\"ahler manifold of complex
  dimension $2n$ with vanishing odd-Betti numbers. Then the second Betti
  number is bounded.
\end{theorem}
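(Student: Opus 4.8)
The plan is to turn the three facts recalled in Section~2 --- Verbitsky's description of $SH^2(M)$, the $\mathfrak{so}(b_2+2,\mathbb{C})$-action on $H^{even}$, and Salamon's identity --- into a single polynomial inequality in $b_2$. Since all odd Betti numbers vanish, $H^{\ast}(M,\mathbb{C})=H^{even}(M,\mathbb{C})$ is the entire Looijenga--Lunts--Verbitsky $\mathfrak{so}(b_2+2,\mathbb{C})$-module, and the first step is to decompose it into irreducibles. This module is concentrated in cohomological degrees $0,2,\dots,4n$, and every irreducible constituent is stable under the Lefschetz $\mathfrak{sl}_2\subset\mathfrak{so}(b_2+2,\mathbb{C})$ and hence symmetric about the middle degree $2n$; thus each constituent has bounded Lefschetz weight, belongs to a finite list of representation types independent of $b_2$, and has dimension --- and graded pieces in each $H^{2k}$ --- equal to universal polynomials in $b_2$. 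I would record the outcome as $b_{2k}=\sum_\lambda m_\lambda P_{\lambda,k}(b_2)$ with $m_\lambda\in\mathbb{Z}_{\ge 0}$ and the $P_{\lambda,k}$ explicit.

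Next I would isolate the dominant constituent. By \ref{verb-cohomol} the ideal of relations in $S^{\ast}H^2(M)$ is generated in degree $n+1$, so $S^kH^2\hookrightarrow H^{2k}$ for $k\le n$; hence $b_{2k}\ge\binom{b_2+k-1}{k}$ for $0\le k\le n$, and as an $\mathfrak{so}(b_2+2,\mathbb{C})$-module $SH^2(M)$ is the unique irreducible constituent of maximal Lefschetz weight $n$ --- the traceless part of $\Sym^n$ of the standard $(b_2+2)$-dimensional representation, as is forced by $\dim H^0=1$. It contributes to $H^{2k}$ a polynomial of degree exactly $k$ with positive leading coefficient $1/k!$, and it already exhausts $H^0$ and $H^2$; every other constituent has strictly smaller Lefschetz weight, so it contributes only in the range $4\le 2k\le 4n-4$. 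I would then determine which of these extra types actually occur by the octant-of-the-Hodge-diamond bookkeeping, arranging that for $b_2$ large the associated polynomials $P_{\lambda,k}$ are non-negative in the relevant range.

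Finally I would feed this into Salamon's relation, which for vanishing odd Betti numbers reads
\[
n\,b_{2n}=2\sum_{m=0}^{n-1}\bigl(12(n-m)^2-n\bigr)\,b_{2m},
\]
a single linear relation among $b_0=1,b_2,\dots,b_{2n}$. Substituting the polynomial expressions and separating off the $SH^2(M)$-part turns this into
\[
Q(b_2)+\sum_\lambda m_\lambda R_\lambda(b_2)=0,
\]
where $\lambda$ runs over the non-$SH^2$ constituents and $Q$ collects the $SH^2(M)$-contribution together with the constant term coming from $b_0=1$. The point is that $Q$ has degree $n$ in $b_2$ with leading coefficient $1/(n-1)!>0$: the left side of Salamon's relation contributes $n\,b_{2n}^{SH^2}\sim b_2^{n}/(n-1)!$, while the right side involves only $b_{2m}^{SH^2}$ with $m\le n-1$, of degree $\le n-1$. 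Since $\sum_\lambda m_\lambda R_\lambda(b_2)\ge 0$ for $b_2$ large, the identity is impossible once $b_2$ exceeds the largest real root of $Q$, which proves \ref{boundness}. In complex dimension four this is exactly Guan's computation $b_2^2-19b_2-92\le 0$, giving $b_2\le 23$; pushing the same computation through explicitly in complex dimensions eight and ten yields $b_2\le 24$ and $b_2\le 25$.

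The step I expect to be the real obstacle is controlling the extra constituents. An exterior power $\Lambda^j(\mathbb{C}^{b_2+2})$ with $j$ in the middle range would contribute to $H^4$ a quantity that is exponential in $b_2$ and that enters the substituted Salamon identity with the wrong sign, so such constituents must be excluded or their multiplicities bounded; this is where the finer $(p,q)$-refinement of the $\mathfrak{so}(b_2+2,\mathbb{C})$-decomposition is needed, and in complex dimensions eight and ten an honest case analysis of the admissible Hodge diamonds, which is also why the explicit numerical bound comes out one dimension at a time. One then has to check, in each fixed dimension, that the surviving leading coefficient of $Q$ genuinely has the right sign --- a finite verification.
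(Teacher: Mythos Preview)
Your approach is essentially the paper's: decompose $H^{even}$ under $\mathfrak{so}(b_2+2,\mathbb{C})$, separate off the $\Sym^{\ast}H^2$-piece from the remaining constituents (with multiplicities $c,d,e,\dots$), substitute into Salamon's relation with all odd $b_i=0$, and observe that the $\Sym$-part yields a degree-$n$ polynomial in $b_2$ while the rest has degree $\le n-1$.

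Two points deserve adjustment. First, the conclusion you draw --- that $b_2$ is bounded by the largest real root of $Q$ --- is precisely what the paper formulates as \emph{Conjecture}~\ref{poly-conj}, not what it proves. The paper's proof of \ref{boundness} only gives $b_2\le\max(b_2^l,b_2^r)$, where $b_2^r$ is the threshold beyond which the right-hand side $Q_n=\sum_\lambda m_\lambda R_\lambda$ is positive; this threshold is universal (independent of the $m_\lambda$) because each $R_\lambda$ is a \emph{fixed} polynomial in $b_2$, depending only on $n$, with positive leading coefficient --- so positivity of each factor, not control of the multiplicities, is what makes the bound uniform. Your write-up asserts the stronger bound without justifying that $b_2^r\le b_2^l$, which the paper explicitly leaves open in general and only verifies by hand in dimensions eight and ten. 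Second, your worry about $\Lambda^j\mathbb{C}^{b_2+2}$ with $j$ ``in the middle range'' contributing exponentially is misplaced: since any irreducible constituent is supported in cohomological degrees $0,\dots,4n$, its Lefschetz weights are bounded by $n$, which forces $j\le n$ independently of $b_2$. The paper simply invokes the Looijenga--Lunts--Verbitsky structure theory for this, and with it every contribution to $H^{2k}$ is polynomial in $b_2$ of degree at most $k-1$ --- no exclusion argument or multiplicity bound is needed.
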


\hfill

\begin{proof}
  
Consider the $2k$-th Betti number. There is inclusion of $\Sym^k(H^2)$ into $H^{2k}(M)$. Thus, its contribution is polynomial of degree $k$. And also there are contributions of several $\mathfrak{so}(b_2+2,\mathbb{C})$-modules into $H^{2k}(M)$. The dimension of each of them is polynomial in terms of $b_2$, and the contribution of $\Lambda^{i}\mathbb{C}^{b_2+2}$ in $H^{2k}(M)$ is polynomial of degree at most $k-1$. Recall that the highest weights lie in octant of Hodge diamond, and denote them as $c, d$ and etc. For instance, $c$ is dimension of $h^{3,1}_{prim}$.

  Then for all even Betti numbers one could write that
  \[ b_{2 k} = \frac{1}{\left( k - 1 \right) !} \prod^{i = k - 1}_{i = 0}
     \left( b_2 + i \right) + P_k \left( b_2, c, d, e, f, \ldots
     \right), \]
     where first term correspond to the dimension of symmetric power, and the second one -- to all contributions of irreducible $\mathfrak{so}(b_2+2,\mathbb{C})$-modules into $H^{2k}(M)$.
  
  From Salamon's equation
\[ n b_{2 n} = 2 \sum_{i = 1}^{2 n} \left( - 1 \right)^i \left( 3 i^2
   - n \right) b_{2 n - i} . \]
  using vanishing of odd-Betti numbers we obtain the following
  \[ \frac{1}{\left( n - 1 \right) !} \prod^{i = n - 1}_{i = 0} \left( b_2 + i
     \right) + P_n \left( b_2, c, d, e, f, \ldots \right) =\]
     \[= \sum^{j =
     2 n}_{j = 1, j \tmop{even}} \left[ \left( 3 j^2 - n \right)
     \frac{1}{\left( n - j / 2 \right) !} \prod^{i = n - j / 2 - 1}_{i = 0}
     \left( b_2 + i \right) + P_j \left( b_2, c, d, e, f, \ldots
     \right) \right] . \]
  Rearrangering terms in equation above (we put terms corresponding to contributions of symmetric powers of $H^2$ on the left-hand side and all others on the right-hand side):
  \begin{eqnarray*}
    - \frac{1}{\left( n - 1 \right) !} \prod^{i = n - 1}_{i = 0} \left( b_2 +
    i \right) + \sum^{j = 2 n}_{j = 1, j \tmop{even}} \left( 3 j^2 -
    n \right) \frac{1}{\left( n - j / 2 \right) !} \prod^{i = n - j / 2 -
    1}_{i = 0} \left( b_2 + i \right) =\\= P_n \left( b_2, c, d, e, f,
    \ldots \right)
    - \sum^{j = 2 n}_{j = 1, j \tmop{even}} P_j \left( b_2, c, d, e, f, \ldots \right) . &  & 
  \end{eqnarray*}
  Denote by $b_2^l$ the maximal root (in terms of $b_2$) of
  polynomial $P(b_2, n)$ on the left-hand side. Then the polynomial on the left-hand side
  is negative than $b_2 \geqslant b_2^l$. This polynomial
  obviously has some positive roots since it is positive for $b_2 = 0$.

  The numbers $c, d, e, f, \ldots$ are positive since they are numbers of
  generators of irreducible $\mathfrak{so}(b_2 + 2$, $\mathbb{C}$)-modules. The leading
  coefficient of polynomial $Q_n \left( b_2, c, d, e, f, \ldots
  \right)$ on the right-hand side is positive. Thus, this polynomial is
  positive then $b_2 \geqslant b_2^r$, where $b_2^r$ is the maximal root of $Q_n \left( b_2, c, d, e, f, \ldots
  \right)$.\\
  
  Hence we get
  $b_2 \leqslant \max \left[ b_2^l, b_2^r \right]$. In the case if
  right-hand side polynomial $Q_n \left( b_2, c, d, e, f, \ldots
  \right)$ does not have any roots, then $b_2 \leqslant b_2^l$.
\end{proof}

\hfill

\begin{conjecture}\label{poly-conj}
The second Betti number $b_2$ is bounded by the maximal root
of the following polynomial

\[ P \left( b_2, n \right) = - \frac{1}{\left( n - 1 \right) !} \prod^{i = n -
   1}_{i = 0} \left( b_2 + i \right) + \sum^{j = 2 n}_{j = 1, j
   \tmop{even}} \left( 3 j^2 - n \right) \frac{1}{\left( n - j / 2 \right) !}
   \prod^{i = n - j / 2 - 1}_{i = 0} \left( b_2 + i \right), \]
which is denoted as $b_2^l$.
\end{conjecture}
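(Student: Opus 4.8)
\medskip
\noindent\textbf{Approach towards the conjecture.}
The plan is to show that the correction polynomial $Q_n\left(b_2,c,d,e,f,\ldots\right)$ on the right-hand side of the rearranged Salamon identity
\[
P\left(b_2,n\right)=Q_n\left(b_2,c,d,e,f,\ldots\right)
\]
obtained in the proof above cannot be negative once $b_2>b_2^l$. Since $P\left(b_2,n\right)$ has negative leading coefficient and $b_2^l$ is its largest real root, $P\left(b_2,n\right)<0$ for $b_2>b_2^l$; hence if $Q_n$ is nonnegative in that range the identity is impossible, and the bound $b_2\leqslant\max\left[b_2^l,b_2^r\right]$ from the proof above improves to the sharp bound $b_2\leqslant b_2^l$.

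First I would make the Looijenga--Lunts--Verbitsky decomposition of $H^{\mathrm{even}}\left(M,\mathbb{C}\right)$ fully explicit in the cohomological degrees $\leqslant 2n$ that enter Salamon's formula. A highest weight of an irreducible $\mathfrak{so}\left(b_2+2,\mathbb{C}\right)$-submodule $\Lambda^{i}\mathbb{C}^{b_2+2}$ producing a primitive class of bidegree $\left(p,q\right)$ with $p+q\leqslant 2n$ is strongly constrained, so only $O\left(n\right)$ of the multiplicities $c,d,e,f,\ldots$ can occur; for each of them one records, as an explicit polynomial in $b_2$ of degree $\leqslant n-1$, the dimension of the part it contributes to each $H^{2k}\left(M\right)$. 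Substituting these together with the $\Sym^{k}H^{2}$ terms into Salamon's equation collects all non-Verbitsky contributions into a linear form
\[
Q_n\left(b_2,c,d,e,f,\ldots\right)=\sum_{\ast}N_{\ast}\left(b_2\right)\,m_{\ast}
\]
in the nonnegative integers $m_{\ast}\in\{c,d,e,f,\ldots\}$, with coefficients $N_{\ast}\left(b_2\right)\in\mathbb{Q}\left[b_2\right]$ of degree $\leqslant n-1$; since hard Lefschetz inside the $\mathfrak{so}\left(b_2+2,\mathbb{C}\right)$-action forces every module that occurs at all to occur already in the middle degree $2n$, the leading coefficients of the $N_{\ast}\left(b_2\right)$ are positive.

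The decisive step is to prove that each $N_{\ast}\left(b_2\right)$ is in fact nonnegative for $b_2>b_2^l$, i.e.\ that its largest real root is at most $b_2^l$. Granting this, for $b_2>b_2^l$ one would have $Q_n\geqslant 0$ and $P\left(b_2,n\right)<0$ at once, a contradiction, forcing $b_2\leqslant b_2^l$. For small $n$ this is a finite verification: only two or three of the weights $c,d,e,\ldots$ survive, the system collapses to a polynomial inequality in two or three parameters, a direct estimate settles it, and one reproduces Sawon's six-dimensional bound and obtains $b_2\leqslant 24$ in complex dimension eight and $b_2\leqslant 25$ in complex dimension ten, as claimed in the abstract.

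The main obstacle is precisely the uniform-in-$n$ positivity of the $N_{\ast}\left(b_2\right)$ for $b_2>b_2^l$: the number of free multiplicities grows with $n$, and each $N_{\ast}\left(b_2\right)$ is assembled from alternating data — the coefficients $3j^{2}-n$ in Salamon's identity together with the inclusion--exclusion $Q_n=P_n-\sum_j P_j$ — so one cannot exclude a priori that one of these polynomials has a real root exceeding $b_2^l$. Controlling this seems to require further geometric positivity: Hodge symmetry $h^{p,q}=h^{q,p}$, the Verbitsky inclusion $\Sym^{n}H^{2}\hookrightarrow H^{2n}$ to bound the remaining multiplicities in terms of $b_2$ itself, and perhaps positivity of Rozansky--Witten or characteristic numbers to dominate those $N_{\ast}$ that might be negative in the relevant range. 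The absence of such a clean uniform estimate is what keeps the statement conjectural.
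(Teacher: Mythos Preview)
Your proposal is not a proof but an outline of the natural strategy together with an honest identification of the obstruction, and this is exactly the status the paper gives the statement: it is a \emph{conjecture}, and immediately after stating it the author writes that to prove it one needs to check that $Q_n(b_2,c,d,e,f,\ldots)$ is positive for $b_2\geqslant b_2^l$, adding that he does not know an explicit proof. Your reduction is therefore the same as the paper's, only spelled out in more detail (the linear decomposition $Q_n=\sum N_\ast(b_2)\,m_\ast$ and the claim that positivity of each coefficient polynomial $N_\ast$ past $b_2^l$ would suffice); the low-dimensional verifications you mention are precisely what the paper carries out in the section on dimensions eight and ten.
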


\hfill

To prove this conjecture one need to check that $Q_n \left( b_2, c, d, e, f, \ldots \right)$ is positive than $b_2 \geq b_2^l$. Author does not know the explicit proof of this \ref{poly-conj}.

\hfill

\begin{proposition}
In conditions of \ref{boundness} we have
\[P \left( b_2, n \right) = - \frac{1}{ n !}
\left( \prod^{i = n -
   1}_{i = 3} \left( b_2 + i \right) \right) \cdot (b_2 + 2n) \cdot (b_2^2-21b_2+2-96n),
\]

and 
\[b_2^l = \frac{21+\sqrt{433+96n}}{2}. \]
\end{proposition}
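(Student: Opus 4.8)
\emph{Sketch of proof.} I would treat $P(b_2,n)$ as the polynomial of degree $n$ that one obtains from Salamon's identity, with the odd Betti numbers set to zero, by separating inside each $b_{2k}$ the dimension $\binom{b_2+k-1}{k}=\frac{1}{k!}\prod_{i=0}^{k-1}(b_2+i)$ of the image of $\Sym^{k}H^{2}$ in $H^{2k}$ and collecting these parts on one side. The only part of $P(b_2,n)$ of degree $n$ is $\frac{1}{(n-1)!}\prod_{i=0}^{n-1}(b_2+i)$, coming from $b_{2n}$ alone, every other summand being of degree at most $n-1$. First I would put $P$ into a workable form: writing the even index in Salamon's sum as $j=2m$, re-indexing by $r=n-m$, and using $\prod_{i=0}^{r-1}(b_2+i)=r!\binom{b_2+r-1}{r}$, one arrives at
\[ P(b_2,n)=\frac{1}{(n-1)!}\prod_{i=0}^{n-1}(b_2+i)-2\sum_{r=0}^{n-1}w(r)\binom{b_2+r-1}{r},\qquad w(r)=3\bigl(2(n-r)\bigr)^{2}-n . \]
All the argument needs about the weight $w$ is that it is a \emph{quadratic} polynomial in $r$ which is invariant under $r\mapsto 2n-r$.

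Next I would read off the roots $b_2=-3,-4,\dots,-(n-1)$. Setting $b_2=-k$ and negating the upper index, $\binom{b_2+r-1}{r}\big|_{b_2=-k}=(-1)^{r}\binom{k}{r}$, the degree-$n$ term $\frac{1}{(n-1)!}\prod_{i=0}^{n-1}(b_2+i)=n\binom{b_2+n-1}{n}$ vanishes whenever $3\le k\le n-1$ (since then $0\le n-1-k<n$), while the rest becomes a constant multiple of $\sum_{r=0}^{k}(-1)^{r}\binom{k}{r}w(r)$, which up to sign is the $k$-th finite difference of the quadratic $w$ and hence is $0$ for every $k\ge 3$. This exhibits the $n-3$ factors $\prod_{i=3}^{n-1}(b_2+i)$ and leaves a cubic cofactor.

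To split off $(b_2+2n)$ I would evaluate at $b_2=-2n$: now $\binom{b_2+r-1}{r}\big|_{b_2=-2n}=(-1)^{r}\binom{2n}{r}$, the degree-$n$ term is a nonzero multiple of $\binom{2n}{n}$, and what remains is a multiple of $S:=\sum_{r=0}^{n-1}(-1)^{r}\binom{2n}{r}w(r)$, which does not vanish on its own. The key device is to complete $S$ to the full alternating sum $\sum_{r=0}^{2n}(-1)^{r}\binom{2n}{r}w(r)$ — which is $0$ because $2n>2=\deg w$ (for $n\ge 2$; the case $n=1$ is immediate) — and then to use $w(2n-r)=w(r)$ together with $\binom{2n}{2n-r}=\binom{2n}{r}$ to fold the range $[n+1,2n]$ back onto $[0,n-1]$, obtaining $S=-\tfrac12(-1)^{n}\binom{2n}{n}w(n)$ with $w(n)=-n$; substituting this back, the two contributions to $P(-2n,n)$ cancel, so $(b_2+2n)\mid P(b_2,n)$ and $P(b_2,n)=\bigl(\prod_{i=3}^{n-1}(b_2+i)\bigr)(b_2+2n)\,\Phi(b_2)$ with $\Phi$ quadratic. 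The three coefficients of $\Phi$ are then forced: the leading one by the leading coefficient $\tfrac{1}{(n-1)!}$ of $P$; the linear one, which comes out $-21$, by the coefficient of $b_2^{n-1}$ in $P$ (only the degree-$n$ and the degree-$(n-1)$ symmetric pieces, the latter through $m=1$, contribute to it); and the constant one by $P(0,n)=-2w(0)=-2n(12n-1)$. This gives $\Phi(b_2)=\frac{1}{(n-1)!}\bigl(b_2^{2}-21b_2+2-24n\bigr)$, the claimed factorization. Finally, the roots of $P(b_2,n)$ are the negative integers $-3,\dots,-(n-1),-2n$ and the two roots of the quadratic factor; since $\sqrt{433+96n}>21$ for all $n\ge 1$, the smaller of those two is negative as well, so the maximal root is the larger one, namely $b_2^{l}=\dfrac{21+\sqrt{433+96n}}{2}$ by the quadratic formula.

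The step I expect to be the real obstacle is the evaluation at $b_2=-2n$, and with it the computation of the constant term of $\Phi$: at the roots $-3,\dots,-(n-1)$ the relevant alternating binomial sum dies for the trivial reason that a quadratic has vanishing higher-order differences, whereas at $b_2=-2n$ one genuinely needs the reflection symmetry $w(r)=w(2n-r)$ of the weight together with the trick of completing the sum to full range. The rest is routine bookkeeping with binomial coefficients and finite differences.
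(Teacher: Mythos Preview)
Your argument is correct and takes a genuinely different route from the paper's. The paper proceeds by an explicit induction on the number of terms: it checks that the last four summands of $P(b_2,n)$ share the factor $(b_2+3)$, and then shows by induction that the sum of the last $k$ summands equals
\[
\frac{2}{(k-1)!}\Bigl(\textstyle\prod_{i=3}^{k-1}(b_2+i)\Bigr)\bigl(A_k\,b_2^{2}+3B_k\,b_2+12n(n-1)\bigr),
\]
with closed formulas for $A_k$ and $B_k$; setting $k=n$ and combining with the remaining degree-$n$ term yields the factorisation. Your approach instead locates the linear factors directly: you evaluate $P$ at $b_2=-k$ for $3\le k\le n-1$, where the relevant sum becomes a $k$-th finite difference of the quadratic weight $w$ and hence vanishes, and then at $b_2=-2n$, where you complete the alternating sum to full range $[0,2n]$ and fold it back using the symmetry $w(r)=w(2n-r)$. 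After that, the quadratic cofactor is pinned down by three linear conditions. The paper's induction is more hands-on and produces the intermediate partial sums explicitly; your method is more conceptual, explaining \emph{why} the factors $(b_2+3)\cdots(b_2+n-1)(b_2+2n)$ must appear, and avoids tracking the auxiliary sequences $A_k,B_k$ altogether.

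Two minor remarks. First, your normalisation (positive leading coefficient, factor $2$ in front of the sum) differs from the definition of $P(b_2,n)$ displayed in the Conjecture, but it matches the form the paper itself uses inside the proof of the Proposition; this is an inconsistency in the paper, not in your argument. Second, your constant term $2-24n$ in the quadratic factor disagrees with the $2-96n$ printed in the statement, but yours is the one consistent both with the claimed value $b_2^{l}=\tfrac{21+\sqrt{433+96n}}{2}$ (discriminant $441-4(2-24n)=433+96n$) and with the explicit polynomials the paper writes out in dimensions $8$ and $10$; the ``$96n$'' in the displayed quadratic is a typo for ``$24n$''.
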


\begin{proof}

We can see that last four terms of $P \left( b_2, n \right)$ are divided by $(b_2+3)$ and factor is 

\[\frac{1}{3}((12n^2-73n+108)b_2^2+3(12n^2-49n+48)b_2+12n(n-1)
\]

Then, we can claim by induction (we omit explicit calculations) that sum of last $k$ terms of $P \left( b_2, n \right)$ is 

\[2 \frac{1}{(k-1)!} \left( \prod^{i = k -
   1}_{i = 3} \left( b_2 + i \right) \right) \cdot(A_k \cdot b_2^2+3B_k \cdot b_2+12n(n-1)),
\]

where $A_k = (12n^2-(73+24(k-4))n+108+60k+24 \frac{(k-4)(k-3)}{2})$, and $B_k = (12n^2-(49+16k)n+48+24k+8\frac{(k-4)(k-3)}{2})$.

Thus, we can write our polynomial $P \left( b_2, n \right)$ in the following form

\[P \left( b_2, n \right) = \frac{1}{\left( n - 1 \right) !} \prod^{i = n - 1}_{i = 0} \left( b_2 + i
     \right) + 2 \frac{1}{(n-1)!} \left( \prod^{i = n -
   1}_{i = 3} \left( b_2 + i \right) \right) \cdot(A_n \cdot b_2^2+3B_n \cdot b_2+12n(n-1)).
\]

After rearranging of common factors we get the statement.
\end{proof}

\section{Dimension eight and ten}

To find explicit boundary condition we shall use the method described by Justin Sawon \cite{S}. In his work he has studied how different $\mathfrak{so}(b_2+2,\mathbb{C})$-modules sit inside Hodge diamond. 

\hfill

\begin{theorem}\label{Dim8}
  Let $M$ be a eight-dimensional hyperk\"ahler manifold with $H^{2
  k + 1} \left( M, \mathbb{C} \right) = 0$. Then $b_2 \leqslant 24$.
\end{theorem}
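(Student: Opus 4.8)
The plan is to carry out the argument of Theorem~\ref{boundness} with $n=4$, making each polynomial that occurs there explicit by analysing, in the manner of Sawon, how the irreducible $\mathfrak{so}(b_2+2,\mathbb{C})$-modules sit inside $H^{\mathrm{even}}(M,\mathbb{C})$. First I would record the shape of the cohomology: since the odd Betti numbers vanish, the Betti numbers in play are $b_0=b_{16}=1$, $b_2=b_{14}$, $b_4=b_{12}$, $b_6=b_{10}$, and the middle one $b_8$. By Theorem~\ref{verb-cohomol} no relation in $SH^2(M)$ appears below degree $2n+2=10$, so $\Sym^kH^2\hookrightarrow H^{2k}$ for $0\le k\le 4$, and this subalgebra contributes $\binom{b_2+k-1}{k}$ to $b_{2k}$. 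By the results of Looijenga--Lunts and Verbitsky, $H^{\mathrm{even}}(M,\mathbb{C})$ decomposes into irreducible $\mathfrak{so}(b_2+2,\mathbb{C})$-modules, each isomorphic to some $\Lambda^i\mathbb{C}^{b_2+2}$; one summand is $SH^2(M)$ itself and the rest are the ``new'' modules. Following Sawon, I would identify the highest-weight vectors of the new modules with the primitive Hodge classes lying in the octant of the eight-dimensional Hodge diamond: under the hypotheses (here $H^{2k+1}=0$, and $h^{2,0}=1$ by irreducibility) these occur only at a short, explicit list of bidegrees, the first contributing $c=h^{3,1}_{\mathrm{prim}}$ and the remaining ones $d,e,\dots$ (all nonnegative integers). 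For each new module I would write down, as in Sawon's six-dimensional computation, its contribution to $b_{2k}$, which is a definite polynomial in $b_2$ of degree at most $k-1$.

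Next I would write $b_{2k}=\binom{b_2+k-1}{k}+P^{\mathrm{ns}}_k(b_2;c,d,e,\dots)$ for $1\le k\le 4$ and substitute into Salamon's relation, which with the odd Betti numbers vanishing reads $4b_8=16b_6+88b_4+208b_2+376$. Moving the symmetric contributions to one side produces exactly the identity $P(b_2,4)=Q_4(b_2;c,d,e,\dots)$, with $P(b_2,n)$ the polynomial of the Proposition. That polynomial has negative leading coefficient, and by the Proposition its largest root is $b_2^l=\frac{1}{2}(21+\sqrt{433+96\cdot 4})=\frac{1}{2}(21+\sqrt{817})$, which lies strictly between $24$ and $25$; hence $P(b_2,4)<0$ for every integer $b_2\ge 25$.

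To close the argument I would show that the remainder polynomial satisfies $Q_4(b_2;c,d,e,\dots)\ge 0$ for every integer $b_2\ge 25$. Since $c,d,e,\dots\ge 0$ and $Q_4$ has no constant term, this reduces to verifying that each of the finitely many, low-degree polynomial coefficients of $c,d,e,\dots$ in $Q_4$ is nonnegative on $\{b_2\ge 25\}$ --- a bounded collection of elementary root estimates. Granting this, $P(b_2,4)=Q_4\ge 0$ for $b_2\ge 25$ would contradict $P(b_2,4)<0$; therefore $b_2\le 24$.

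The hard part will be the representation-theoretic input: producing the \emph{complete} list of new $\mathfrak{so}(b_2+2,\mathbb{C})$-modules that occur in $H^{\mathrm{even}}(M)$ in complex dimension eight, together with their exact cohomological contributions --- this is where Sawon's method does the real work --- and then the positivity of $Q_4$, which is precisely the step left open after Conjecture~\ref{poly-conj} in general. What makes it tractable in dimension eight is that the number of unknown primitive dimensions $c,d,\dots$ and the degrees of the polynomials $P^{\mathrm{ns}}_k$ are all small, so the final sign check, while tedious, is finite.
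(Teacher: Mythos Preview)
Your proposal is correct and follows essentially the same route as the paper: decompose $b_4,b_6,b_8$ via the $\mathfrak{so}(b_2+2,\mathbb{C})$-module structure (Sawon's method), plug into Salamon's relation with the odd Betti numbers set to zero, and compare signs of the two sides for $b_2\ge 25$. The only simplification in the paper that you do not exploit is that instead of compiling the \emph{complete} list of modules to get an equality for $b_8$, it uses only the lower bound $b_8\ge \binom{b_2+3}{4}+c\cdot(\ldots)+d\cdot(\ldots)+e\,b_2$, which already suffices and spares you the enumeration of the middle-degree primitives you flagged as the hard part.
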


\begin{proof}

  We will write some explicit splitting of $b_4, b_6,$ and $b_8$ in terms of
  $b_2$ and generators of primitives part of cohomologies.
  
  There is an action of $\mathfrak{so} (b_2 + 2,\mathbb{C})$ on the
  complex cohomology of $M$. Under this action, an element of $H^{3,
  1}_{\tmop{pr}} (M)$ will generate an irreducible $\mathfrak{so} (b_2 + 2,
  \mathbb{C})$-module of dimension $\frac{(b_2 + 2) (b_2 + 1) b_2}{6}$. In
  Hodge diamond this part sits as
  
  {\small
  \begin{center}
    \begin{tabular}{ccccccccccccccccc}
      &  &  &  &  &  &  &  & 0 &  &  &  &  &  &  &  & \\
      &  &  &  &  &  &  & 0 &  & 0 &  &  &  &  &  &  & \\
      &  &  &  &  &  & 0 &  & 0 &  & 0 &  &  &  &  &  & \\
      &  &  &  &  & 0 &  & 0 &  & 0 &  & 0 &  &  &  &  & \\
      &  &  &  & 0 &  & 1 &  & $h$ &  & 1 &  & 0 &  &  &  & \\
      &  &  & 0 &  & 0 &  & 0 &  & 0 &  & 0 &  & 0 &  &  & \\
      &  & 0 &  & 1 &  & $h$ &  & $\frac{h^2 - h + 4}{2}$ &  & $h$ &  & 1 &  & 0
      &  & \\
      & 0 &  & 0 &  & 0 &  & 0 &  & 0 &  & 0 &  & 0 &  & 0 & \\
      0 &  & 0 &  & h &  & $\frac{h^2 - h + 4}{2}$ &  & $\frac{h^3 - 3 h^2 -
      10 h - 60}{6}$ &  & $\frac{h^2 - h + 4}{2}$ &  & $h$ &  & 0 &  & 0\\
      & 0 &  & 0 &  & 0 &  & 0 &  & 0 &  & 0 &  & 0 &  & 0 & \\
      &  & 0 &  & 1 &  & $h$ &  & $\frac{h^2 - h + 4}{2}$ &  & $h$ &  & 1 &  & 0
      &  & \\
      &  &  & 0 &  & 0 &  & 0 &  & 0 &  & 0 &  & 0 &  &  & \\
      &  &  &  & 0 &  & 1 &  & $h$ &  & 1 &  & 0 &  &  &  & \\
      &  &  &  &  & 0 &  & 0 &  & 0 &  & 0 &  &  &  &  & \\
      &  &  &  &  &  & 0 &  & 0 &  & 0 &  &  &  &  &  & \\
      &  &  &  &  &  &  & 0 &  & 0 &  &  &  &  &  &  & \\
      &  &  &  &  &  &  &  & 0 &  &  &  &  &  &  &  & 
    \end{tabular}
  \end{center}
}

  In the same way we have an irreducible $\mathfrak{so} (b_2 + 2,\mathbb{C})$-module
  generated by additional elements of $H^{2, 2}_{\tmop{pr}}$, which sits
  inside the Hodge diamond as
  
  \begin{center}
    \begin{tabular}{ccccccccccccccccc}
      &  &  &  &  &  &  &  & 0 &  &  &  &  &  &  &  & \\
      &  &  &  &  &  &  & 0 &  & 0 &  &  &  &  &  &  & \\
      &  &  &  &  &  & 0 &  & 0 &  & 0 &  &  &  &  &  & \\
      &  &  &  &  & 0 &  & 0 &  & 0 &  & 0 &  &  &  &  & \\
      &  &  &  & 0 &  & 0 &  & 1 &  & 0 &  & 0 &  &  &  & \\
      &  &  & 0 &  & 0 &  & 0 &  & 0 &  & 0 &  & 0 &  &  & \\
      &  & 0 &  & 0 &  & 1 &  & $h$ &  & 1 &  & 0 &  & 0 &  & \\
      & 0 &  & 0 &  & 0 &  & 0 &  & 0 &  & 0 &  & 0 &  & 0 & \\
      0 &  & 0 &  & 1 &  & $h$ &  & $\frac{h^2 - h-4}{2}$ &  & $h$ &  & 1 &  &
      0 &  & 0\\
      & 0 &  & 0 &  & 0 &  & 0 &  & 0 &  & 0 &  & 0 &  & 0 & \\
      &  & 0 &  & 0 &  & 1 &  & $h$ &  & 1 &  & 0 &  & 0 &  & \\
      &  &  & 0 &  & 0 &  & 0 &  & 0 &  & 0 &  & 0 &  &  & \\
      &  &  &  & 0 &  & 0 &  & 1 &  & 0 &  & 0 &  &  &  & \\
      &  &  &  &  & 0 &  & 0 &  & 0 &  & 0 &  &  &  &  & \\
      &  &  &  &  &  & 0 &  & 0 &  & 0 &  &  &  &  &  & \\
      &  &  &  &  &  &  & 0 &  & 0 &  &  &  &  &  &  & \\
      &  &  &  &  &  &  &  & 0 &  &  &  &  &  &  &  & 
    \end{tabular}
  \end{center}

  Recall that $b_4 = \frac{\left( b_2 + 1 \right) b_2}{2} + c b_2 + d,$ where
  $d$ -- part of $H^{2, 2}_{\tmop{pr}}$ that does not come from $H^{3,
  1}_{\tmop{pr}}$
  
  So the first module gives 
 \[ c \left( \frac{h^3 + 3 h^2 - 4 h -36}{6} \right)
  = c \left( \frac{b_2^3 - 3 b_2^2 - 4 b_2 -24}{6} \right),\]
  
  the second one --
  $d \left( \frac{h^2 - h - 4}{2} + 2 h + 2 \right) = d \left( \frac{h^2 + 3 h
  }{2} \right) = d \left( \frac{b_2^2 - b_2 - 2}{2} \right)$.
  
  Definitely, in $H^8$ there are elements which come from those part of $H^6$
  which is not generated by $\tmop{Sym}^3 H^2$ and two $\mathfrak{so} \left( b_2 +
  2, \mathbb{C} \right)$-modules generated by elements of $H^{3,
  1}_{\tmop{pr}}$ and $H^{2, 2}_{\tmop{pr}}$.
  \[ b_6 = \frac{\left( b_2 + 2 \right) \left( b_2 + 1 \right) b_2}{6} + c
     \left( \frac{b_2^2 - b_2 + 2}{2} \right) + d b_2 + e \]
  Then each element of $H^{3, 3}$ part generate $\mathfrak{so} \left( b_2 + 2,
  \mathbb{C} \right)$-module of dimension $\left( b_2 + 2 \right)$. That
  gives in $b_8$ the following term $e \left( b_2 \right)$
  \begin{eqnarray*}
    b_8 \geqslant \frac{\left( b_2 + 3 \right) \left( b_2 + 2 \right) \left(
    b_2 + 1 \right) b_2}{24} + c \left( \frac{b_2^3 - 3 b_2^2 - 4 b_2 -24}{6}
    \right) + d \left( \frac{b_2^2 - b_2 - 2}{2} \right) + e b_2 &  & 
  \end{eqnarray*}

  From Salamon's relation we have
  \begin{eqnarray*}
    8 \cdot \frac{\left( b_2 + 2 \right) \left( b_2 + 1 \right) b_2}{6} + 8 c
    \left( \frac{b_2^2 - b_2 + 2}{2} \right) + 8 d b_2 + 8 e + 44 \left(
    \frac{\left( b_2 + 1 \right) b_2}{2} + c b_2 + d \right)+\\ + 104 b_2  + 188 +
    b_7 - 71 b_3 - 23 b_5 \geqslant \\ \geqslant 2\frac{\left( b_2 + 3 \right) \left( b_2 +
    2 \right) \left( b_2 + 1 \right) b_2}{24} + 2c \left( \frac{b_2^3 - 3 b_2^2 - 4 b_2 -24}{6} \right) + 2d \left( \frac{b_2^2 - b_2 - 2}{2} \right) + 2e b_2
    &  & 
  \end{eqnarray*}

  Then after rearranging we get
  
  \begin{eqnarray*}
- 2 \frac{\left( b_2 + 3 \right) \left( b_2 + 2 \right) \left( b_2
    + 1 \right) b_2}{24} + 8 \cdot \frac{\left( b_2 + 2 \right) \left( b_2 + 1 \right) b_2}{6} + 44
    \left( \frac{\left( b_2 + 1 \right) b_2}{2} \right) + 104 b_2 + 188 + b_7 \geqslant \\
    \geqslant c \left( \frac{b^3_2 - 15 b_2^2 - 124 b_2 -
    48}{3} \right) + d \left( b_2^2 -  9b_2 - 46 \right) + 2 e
    \left( b_2 - 4 \right) + 71 b_3 + 23 b_5               
  \end{eqnarray*}
  
  The left-part is $\frac{- b_2^4 + 10 b_2^3 + 301 b_2^2 + 530 b_2 + 2256}{12}
  + b_7$.\\
  
  Now recall that all odd Betti numbers are zero. Then the left-hand side is
  negative for $b_2 \geqslant 24$, although the right-hand side is positive.  
\end{proof}

\hfill

{\bf{Remark:}} The second Betti number $b_2$ is at most 24 for a sufficiently small $b_7$.
Indeed, for the proof of \ref{Dim8} we use the fact that
\[ F \left( b_2 \right) + b_7 := \frac{- 2 b_2^4 + 20 b_2^3 + 602 b_2^2
   + 1060 b_2 + 4512}{24} + b_7 \]
is negative than $b_2 \geqslant 25$. This is also true for $b_7 \leqslant
\left| F \left( b_2 \right)\vert_{b_2 = 25} \right| = 1281$.

\hfill

\begin{theorem}\label{Dim10}
  Let $M$ be a ten-dimensional hyperk\"ahler manifold with $H^{2 k
  + 1} \left( M, \mathbb{C} \right) = 0$. Then $b_2 \leqslant 25$.
\end{theorem}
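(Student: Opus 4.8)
The plan is to follow verbatim the route of Theorem~\ref{Dim8}, now in complex dimension ten, i.e.\ with $n=5$ in Salamon's relation. First I would write out Salamon's identity for $n=5$; using $b_1=b_3=b_5=b_7=b_9=0$ it collapses to
\[ 5\,b_{10} = 14\,b_8 + 86\,b_6 + 206\,b_4 + 374\,b_2 + 590 . \]
As before, $b_4$ and $b_6$ are already written exactly in terms of $b_2$ and the generator counts $c=h^{3,1}_{\mathrm{pr}}$, $d$ (the part of $h^{2,2}_{\mathrm{pr}}$ not coming from $H^{3,1}_{\mathrm{pr}}$) and $e=h^{3,3}_{\mathrm{pr}}$, by the formulas used in the proof of Theorem~\ref{Dim8}. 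The genuinely new point is that here $b_8$ appears with a positive coefficient on the right, so the mere lower bound for $b_8$ used in dimension eight no longer suffices: I need the \emph{exact} $\mathfrak{so}(b_2+2,\mathbb{C})$-decomposition of $H^8$. Concretely this means naming the next primitive generator, the part $f$ of $H^8$ not generated by $\mathrm{Sym}^4 H^2$ and the three modules coming from $H^{3,1}_{\mathrm{pr}}$, $H^{2,2}_{\mathrm{pr}}$, $H^{3,3}_{\mathrm{pr}}$, and upgrading the $b_8$-inequality of Theorem~\ref{Dim8} to the equality
\[ b_8 = \tfrac{(b_2+3)(b_2+2)(b_2+1)b_2}{24} + c\,\tfrac{b_2^3-3b_2^2-4b_2-24}{6} + d\,\tfrac{b_2^2-b_2-2}{2} + e\,b_2 + f . \]

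Next I would carry out Sawon's bookkeeping one Hodge level higher: draw the placement in the Hodge diamond of each irreducible $\mathfrak{so}(b_2+2,\mathbb{C})$-module generated by $H^{3,1}_{\mathrm{pr}}$, $H^{2,2}_{\mathrm{pr}}$, $H^{3,3}_{\mathrm{pr}}$ and the new primitive in $H^8$, and read off their contributions to $H^{10}$. Adding the symmetric-power part $\mathrm{Sym}^5 H^2\hookrightarrow H^{10}$ then gives a lower bound
\[ b_{10} \ \geq\ \tfrac1{120}\prod_{i=0}^{4}(b_2+i) + c\,P_c(b_2) + d\,P_d(b_2) + e\,P_e(b_2) + f\,P_f(b_2), \]
where $P_c,P_d,P_e$ are polynomials of degrees $4,3,2$ with leading coefficients $\tfrac1{24},\tfrac16,\tfrac12$, $P_f$ is linear, and the contribution of the (still undetermined) primitive in $H^{10}$ is simply discarded.

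Substituting the exact expressions for $b_4,b_6,b_8$ and this lower bound for $b_{10}$ into Salamon's relation, and collecting the pure $b_2$-polynomials on one side, produces an inequality of the shape
\[ 0 \ \leq\ R(b_2) + c\,Q_c(b_2) + d\,Q_d(b_2) + e\,Q_e(b_2) + f\,Q_f(b_2), \]
where, comparing degrees, $R$ has degree $5$ with leading coefficient $-\tfrac1{24}$, $Q_c$ has degree $4$ with leading coefficient $-\tfrac5{24}$, and $Q_d,Q_e,Q_f$ have degrees $3,2,1$, all with negative leading coefficients. Since $c,d,e,f$ count generators of irreducible modules they are non-negative, so it remains only to check by direct evaluation that $R$ is negative at $b_2=26$ and that $Q_c,Q_d,Q_e,Q_f$ are non-positive there; this forces the whole right-hand side to be negative for $b_2\geq 26$, a contradiction, whence $b_2\leq 25$. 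As in the Remark after Theorem~\ref{Dim8}, only the smallness of the odd Betti numbers is really used, so a version with $b_7,b_9$ bounded would follow in the same way.

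The step I expect to be the main obstacle is the Hodge-diamond analysis of the third paragraph: correctly identifying the bidegree(s) of the new primitive class(es) in $H^8$ and the module(s) they generate, and — crucially — verifying that $H^4,H^6,H^8$ carry no $\mathfrak{so}(b_2+2,\mathbb{C})$-summands beyond those listed, so that the formulas for $b_4,b_6,b_8$ are genuine equalities and not merely lower bounds. The surrounding polynomial arithmetic, including pinning the threshold at $b_2=26$, is routine but must be done with care, since a slip in the degree-$5$ coefficient would move the final constant.
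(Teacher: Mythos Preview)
Your plan is essentially the paper's own (very terse) proof: the paper likewise names primitive contributions $c,d,e,f,g$ from $H^{3,1}_{\mathrm{pr}},H^{2,2}_{\mathrm{pr}},H^{3,3}_{\mathrm{pr}},H^{4,4}_{\mathrm{pr}},H^{5,5}_{\mathrm{pr}}$ (so your worry about the bidegree in $H^8$ is resolved---it is $(4,4)$, generating a copy of $\mathbb{C}^{b_2+2}$), substitutes into Salamon's relation, and checks that the pure-$b_2$ side factors as $\tfrac{1}{60}(b_2+3)(b_2+4)(b_2+10)(-b_2^2+21b_2+118)$, negative for $b_2\geq 26$, while the $c,d,e,f,g$ side is non-negative there. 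Your discarding of the $H^{10}$-primitive $g$ rather than tracking it is harmless, since its coefficient is non-negative.
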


\begin{proof}
  The proof is very similar to the previous one. We could see that they are
  contributions from $H^{3, 1}_{\tmop{pr}}, H^{2, 2}_{\tmop{pr}},
  H^{3, 3}_{\tmop{pr}}, H^{4, 4}_{\tmop{pr}} .$ They are isomorphic to
  $\bigwedge^4 \mathbb{C}^{b_2 + 2}, \bigwedge^3 \mathbb{C}^{b_2 +
  2}, \bigwedge^2 \mathbb{C}^{b_2 + 2}$, and $\mathbb{C}^{b_2 + 2}$ as
  irreducible $\mathfrak{so} \left( b_2 + 2,\mathbb{C} \right) - \tmop{modules}$.
  
  The explicit calculations like in the case of dimension eight give us the
  following  
  \[ \frac{1}{60}(b_2+3)(b_2+4)(b_2+10) ( - b_2^2- 21b_2 + 118) = Q \left( b_2, c, d, e, f, g \right),\]
  
  Polynomial Q($b_2$) is positive for $b_2 \geqslant 26$ since $c, d, e, f, g$ are positive constants defined above ($f$ sits in
  $H^{4, 4}$-part, $g$ generates $H^{5, 5}$). The left-hand side is negative
  for $b_2 \geqslant 26$. Hence, $b_2 \leqslant 25$.
\end{proof}

\hfill

{\small
\noindent {\sc Nikon Kurnosov\\
Laboratory of Algebraic Geometry and its applications,\\
National Research University Higher School of Economics\\
7 Vavilova Str., Moscow, Russia, 117312;\\
Independent University of Moscow,\\
11 Bol.Vlas'evskiy per., Moscow, Russia, 119002}\\
\it  nikon.kurnosov@gmail.com 
}

\end{document}